%% Template for writing an abstract for the Oberwolfach Reports
%% maintained by Oliver Wienand <reports at mfo dot de>
%% last change on April 11th, 2011

%% Before submitting the report to the reporter
%% you can run automated tests for common errors online
%% on the websites of the institute.
%% For further informations visit:
%% http://www.mfo.de/scientific-programme/publications/owr
%% The Online Test can be found at:
%% http://www.mfo.de/scientific-programme/publications/owr/diagnostics

\documentclass[a4paper,10pt]{amsart}

\usepackage{amsthm}
\usepackage[all]{xy}

\newtheorem*{theorem}{Theorem}
\newtheorem*{conjecture}{Conjecture}
\newtheorem*{proposition}{Proposition}
\newtheorem*{question}{Question}
\theoremstyle{definition}
\newtheorem*{definition}{Definition}

%% -------------------------------------------------------------------------------
%% Enter additionally required packages below this comment.
%% Please be conservative and only require packages
%% which are commonly available.
%%
%% For the inclusion of graphics, .eps files and the use
%% of the psfig, epsfig, graphics or graphicx packages
%% are preferred.
%% Please avoid including pixel-based graphics formats
%% (such as .gif, .png, .jpg) and the pdf format.
%% -------------------------------------------------------------------------------

%% -------------------------------------------------------------------------------
%% LOCL: Enter local definitions (such as \newcommands and
%% custom environments) here.
%% Please declare only commands, which are really used in your abstract below.
%% -------------------------------------------------------------------------------

%% -------------------------------------------------------------------------------
%% Document begins here
%% -------------------------------------------------------------------------------

\begin{document}

%% -------------------------------------------------------------------------------
%% Example abstract.
%%
%% Please use the environment talk, it has three obligatory and one
%% optional argument. The syntax is:
%% -----------------------
%% \begin{talk}[coauthors]{Name of the speaker}{Title of the talk}{Author Sorting Index}
%%      .....
%% \end{talk}
%% -----------------------
%% The names of coauthors will appear in form of :
%% (joint work with ...)
%%
%% The Author Sorting Index is only used for sorting the list of all authors
%% by their last name. It should not contain special characters like
%% accents, german umlaute, etc. (just replace \"a by a, \^a by a, etc.)
%%
%% Please use the standard thebibliography environment to include
%% your references, and try to use labels for the bibitems, which
%% are uniquely assigned to you in order to avoid conflicts with other authors.
%% You can achieve unique labels by using our on initials before every label.
%% -------------------------------------------------------------------------------
\begin{abstract}
This short note is an extended abstract of a talk given at the conference \lq\lq Komplexe Analysis\rq\rq{} at the Mathematisches Forschungsinstitut Oberwolfach in September 2012. We explained some recent results about the existence of rational curves on Calabi-Yau threefolds as well as a curvature approach to the non hyperbolicity of such manifolds.
\end{abstract}

\author{Simone Diverio}
\title[Rational curves on Calabi-Yau threefolds]{Rational curves on Calabi-Yau threefolds and a conjecture of Oguiso}
\address{Simone Diverio \\ CNRS et Institut de Math\'ematiques de Jussieu -- Paris Rive Gauche.}
\email{diverio@math.jussieu.fr} 
\thanks{The author is partially supported by the ANR project \lq\lq POSITIVE\rq\rq{}, ANR-2010-BLAN-0119-01.}
\keywords{}
\subjclass[2010]{Primary: 14J32; Secondary: 32Q45.}
%\date{\today}
\maketitle

Let $X$ be a compact projective manifold over $\mathbb C$, $\omega$ a K\"ahler metric on $X$, and consider the following statements:

\begin{itemize}
\item[(1)] The holomorphic sectional curvature of $\omega$ is strictly negative.
\item[(2)] The manifold $X$ has non-degenerate negative $k$-jet curvature.
\item[(3)] The manifold $X$ is Kobayashi hyperbolic.
\item[(4)] The manifold $X$ is measure hyperbolic.
\item[(5)] The manifold $X$ is of general type.
\item[(6)] The manifold $X$ does not contain any rational curve.
\item[(7)] The canonical bundle $K_X$ of $X$ is nef.
\item[(8)] The canonical bundle $K_X$ of $X$ is ample.
\end{itemize}

Perhaps only $(2)$ needs some more explanations, which will be given subsequently. We have the following diagram of conjectural and actual implications:

\smallskip
$$
\xymatrix{
(7) & (4) \ar@/_/[r]|?  & (5)\ar@/_/[l] \ar@/_/[r]|{(6)}  & (8) \ar@/_/[l] \ar@/_1.7pc/[lll] \\
(6) \ar[u] & (3) \ar[u] \ar[l] \ar[ur]|? \ar@/_/[r]|? & (2) \ar@/_/[l] \ar[u]|? & (1) \ar[l] \ar[u]|?}
$$

As the diagram shows, the central conjecture here is the equivalence between $(4)$ and $(5)$ (\textsl{i.e.} that measure hyperbolic implies general type), which is known to hold true whenever $X$ is a projective surface. In spite of this, in the sequel we will mostly concentrate ourselves on the conjecture \textsl{a latere}, known as Kobayashi's conjecture, which states that $(3)$ should imply $(5)$ (and hence $(8)$). We shall give some hints and recent results both from a differential-geometric and algebro-geometric viewpoints; moreover, we shall fix our attention on threefolds, since it is the first unknown case.

To begin with, observe that several powerful machineries from birational geometry ---such as the characterization of uniruledness in terms of negativity of the Kodaira dimension, the Iitaka fibration, the abundance conjecture (which is actually a theorem in dimension three)--- permit to reduce this conjecture to the following statement: a projective threefold $X$ of Kodaira dimension $\kappa(X)=0$ cannot be hyperbolic. By the Beauville-Bogomolov decomposition theorem and elementary properties of hyperbolic manifolds, in dimension three \emph{it suffices to show that a Calabi-Yau threefold is not hyperbolic}. Here, by a Calabi-Yau threefold we mean a simply connected compact projective threefold with trivial canonical class $K_X\simeq\mathcal O_X$ and $h^i(X,\mathcal O_X)=0$, $i=1,2$.

\subsection*{Differential-geometric viewpoint}

A weaker form of $(3)\Rightarrow(5)$ and $(8)$, more differential-geometric in flavor, is to show that negative holomorphic sectional curvature implies ampleness of the canonical bundle. This is known up to dimension three, by the work of \cite{sd_H-L-W10}. Again, the core here is to show that the negativity of the holomorphic sectional curvature of $(X,\omega)$ forces the real first Chern class $c_1(X)_\mathbb R$ to be non-zero. 

A possible proof of this fact goes as follows. First, observe that an averaging argument shows that negative holomorphic sectional curvature implies negative scalar curvature. Now, suppose that $c_1(X)_\mathbb R=0$. Then, there exists a smooth function $f\colon X\to\mathbb R$ such that $\operatorname{Ricci}(\omega)=i\partial\bar\partial f$. Now, take the traces with respect to $\omega$ of both sides: modulo non-zero multiplicative constants, on the left we find the scalar curvature and on the right the $\omega$-Laplacian of $f$ which must therefore be always non-zero. Hence, $f$ is constant and the scalar curvature must be zero, contradiction.

Unfortunately, having negative holomorphic sectional curvature is much stron\-ger than being hyperbolic. In \cite{sd_Dem97}, it is conjectured that a weaker notion of negative curvature, namely non-degenerate negative $k$-jet curvature, should be instead equivalent (and it is proved there that it actually implies hyperbolicity). Let us explain briefly what this notion is, referring to \cite{sd_Dem97} for more details.

Let $J_kX\to X$ be the holomorphic fiber bundle of $k$-jets of germs of holomorphic curves $\gamma\colon(\mathbb C,0)\to X$ and $J_kX^{\text{reg}}$ its subset of regular ones, \textsl{i.e.} such that $\gamma'(0)\ne 0$. There is a natural action of the group $\mathbb G_k$ of $k$-jets of biholomorphisms of $(\mathbb C,0)$ on $J_kX$, and the quotient $J_kX^{\text{reg}}/\mathbb G_k$ admits a nice geometric relative compactification $J_kX^{\text{reg}}/\mathbb G_k\hookrightarrow X_k$. Here, $X_k$ is a tower of projective bundles over $X$. In particular, it is naturally endowed with a tautological line bundle $\mathcal O_{X_k}(-1)$, as well as a holomorphic subbundle $V_k\subset T_{X_{k}}$ of its tangent bundle. 

\begin{definition}
The manifold $X$ is said to have \emph{non-degenerate negative $k$-jet curvature} if there exists a singular hermitian metric on $\mathcal O_{X_k}(-1)$ whose Chern curvature current is negative definite along $V_k$ and whose degeneration set is contained in $X_k\setminus(J_kX^{\text{reg}}/\mathbb G_k)$.
\end{definition}

Observe that if $X$ has negative holomorphic sectional curvature, then it naturally has a non-degenerate negative $1$-jet curvature. The following question seems therefore particularly appropriate.

\begin{question}
Is it true that if $X$ has non-degenerate negative $k$-jet curvature then $c_1(X)_\mathbb R\ne 0$?
\end{question}

\subsection*{Algebro-geometric viewpoint} 

Algebraic geometers expect more than the non hyperbolicity of Calabi-Yau's: a \textsl{folklore} conjecture states that every Calabi-Yau manifold should contain a rational curve. For threefolds, let us cite a couple of results in this direction:

\begin{itemize}
\item The article \cite{sd_HB-W92} is the culmination of a series of papers by Wilson in which he studies in a systematic way the geometry of Calabi-Yau threefolds; among many other things, it is shown there that if the Picard number $\rho(X)>13$, then there always exists a rational curve on $X$.
\item Following somehow the same circle of ideas, it was proven in \cite{sd_Pet91} (see also \cite{sd_Ogu93}) that a Calabi-Yau threefold $X$ has a rational curve provided there exists on $X$ a non-zero effective non-ample line bundle on $X$.
\end{itemize}

By the Cone Theorem, if there exists on a Calabi-Yau manifold $X$ a non-zero effective non-nef line bundle, then there exists on $X$ a rational curve (generating an extremal ray). Therefore, we can always suppose that such an effective line bundle is nef. Remark, on the other hand, that in Peternell's result, the effectivity hypothesis is crucial (regarding it in a more modern way) in order to make the machinery of the logMMP work. In this spirit, Oguiso asked in \cite{sd_Ogu93} the following question: \emph{is it true that if a Calabi-Yau threefold $X$ possesses a non-zero nef non-ample line bundle, then there exists a rational curve on $X$?}

Here is a positive answer, under a mild condition on the Picard number of $X$.

\begin{theorem}[Diverio, Ferretti \cite{sd_D-F}]
Let $X$ be a Calabi-Yau threefold and $L\to X$ a non-zero nef non-ample line bundle. Then, $X$ has a rational curve provided $\rho(X)>4$.
\end{theorem}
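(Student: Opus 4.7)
The approach is to reduce to the Peternell--Oguiso theorem recalled above by producing, out of the nef non-ample line bundle $L$, a non-zero effective non-ample line bundle on $X$. The natural dichotomy is according to whether $L$ is big, i.e.\ $L^3>0$.

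In the big case, since $K_X\sim\mathcal{O}_X$ is trivial, the divisor $aL-K_X=aL$ is nef and big for every $a>0$, so the Kawamata--Shokurov base point free theorem applies to $L$ and furnishes some $m\gg 0$ for which $|mL|$ is base point free. Such an $mL$ is then effective and, since $L$ itself is not ample, still non-ample; the Peternell--Oguiso theorem then produces a rational curve on $X$. This step does not use the hypothesis on the Picard number.

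The non-big case $L^3=0$ is the heart of the matter, and is where $\rho(X)>4$ must be used in an essential way. Riemann--Roch on a Calabi-Yau threefold reads
\[
\chi(X,mL)=\frac{m}{12}\,L\cdot c_2(X),
\]
with $L\cdot c_2(X)\ge 0$ by Miyaoka's generic semi-positivity (Calabi-Yau threefolds being non-uniruled). The plan is to exploit the $\rho(X)\ge 5$-dimensional space $N^1(X)_{\mathbb{R}}$ to find an auxiliary divisor $D\in\mathrm{Pic}(X)$ whose numerical interaction with $L$ yields either a nef $\mathbb{Q}$-divisor $L'=aL+bD$ with $(L')^3>0$---reducing to the previous case---or an effective non-ample line bundle directly, after controlling the intermediate cohomology through Serre duality on a Calabi-Yau threefold (which identifies $h^2(mL')$ with $h^1(-mL')$) and Kawamata--Viehweg-type vanishing applied to suitable nef-and-big approximations of $L'$.

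The main obstacle is precisely this non-big case: since $L$ lies on the boundary of the nef cone, any perturbation must be chosen inside the half-space cut out by the extremal rays along which $L$ vanishes, while simultaneously moving $L$ off the cubic hypersurface $\{D^3=0\}\subset N^1(X)_{\mathbb{R}}$ on which it sits. This is a genuinely convex-geometric problem on the cubic cone inside the Picard space, and the numerical threshold $\rho(X)>4$ should be precisely what ensures enough dimensional room for such a perturbation to exist.
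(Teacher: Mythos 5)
There is a genuine gap, and it sits exactly where you place the ``heart of the matter''. Your treatment of the big case is fine (and indeed agrees with the standard reduction: $L$ nef and big gives, via base point freeness or simply Riemann--Roch plus vanishing, an effective non-ample multiple, and then the Peternell--Oguiso theorem applies; no hypothesis on $\rho$ is needed there). But for $L^3=0$ you only state a plan, and the plan as formulated cannot work. First, you never confront the decisive dichotomy $c_2(X)\cdot L>0$ versus $c_2(X)\cdot L=0$. When $c_2(X)\cdot L>0$ and $\nu(L)=2$, Kawamata--Viehweg vanishing in its numerical-dimension form does give $h^1(-mL)=h^2(mL)=0$ and hence effectivity; when $\nu(L)=1$ the vanishing you invoke does not control $h^2(mL)$ at all (and ``nef-and-big approximations of $L'$'' cannot help: if $L'$ is nef, big and non-ample you are already in the easy case, while if it is ample it says nothing about $L$). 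These cases with $c_2\cdot L\neq 0$ are in fact already due to Oguiso; the genuinely open case settled in \cite{sd_D-F} is $L^3=0$ \emph{and} $c_2(X)\cdot L=0$, where Riemann--Roch gives $\chi(mL)=0$ and no effectivity can be extracted this way. Your proposal never isolates, let alone handles, this case.

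Second, the heuristic that $\rho(X)>4$ provides ``enough dimensional room'' to perturb $L$ off the cubic $\{D^3=0\}$ while staying nef is not an argument and points in the wrong direction: the cubic hypersurface and the boundary of the nef cone have the same dimension $\rho-1$ for every $\rho$, so no dimension count can prevent the whole nef boundary from lying inside the cubic, and for small $\rho$ this actually happens. The mechanism in \cite{sd_D-F} is different: one argues by contradiction, assuming $X$ contains no rational curve; then, using Wilson's structure theory of the K\"ahler cone (faces not contained in the cubic cone yield contractions, hence rational curves) together with the known cases above, the boundary of the nef cone is forced into the cubic $\{D^3=0\}$ and interacts with the hyperplane $\{c_2(X)\cdot D=0\}$; convexity of the nef cone then forces the cubic form on $N^1(X)_{\mathbb R}$ to degenerate (a linear factor, or a quadratic factor of Lorentzian signature), and the case analysis of these degenerations bounds $\rho(X)\le 4$. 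The hypothesis $\rho(X)>4$ is used to contradict this bound, not to make room for a perturbation. Note also that the note you were given is an extended abstract: it states the theorem and only hints at the techniques (the proof of the Proposition shows the flavour: rational points on faces, the $c_2$-hyperplane, Euler characteristic plus Kawamata--Viehweg, then Peternell), so for the argument itself you must go to \cite{sd_D-F}.
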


In order to give a rough idea of the techniques involved in this kind of business, let us state (a special case of) the Kawamata-Morrison Cone conjecture and explain how it would almost imply the Kobayashi conjecture.

\begin{conjecture}[Kawamata-Morrison]
Let $X$ be a Calabi-Yau manifold. Then, the action of $\operatorname{Aut}(X)$ on the nef-effective cone of $X$ has a rational polyhedral fundamental domain.
\end{conjecture}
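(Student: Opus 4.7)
The plan is to follow the template used by Sterk to establish the analogous statement for K3 surfaces, and by Kawamata for abelian surfaces, adapted to the Calabi-Yau case in arbitrary dimension. The fundamental insight is that the nef-effective cone has a rich rational polyhedral structure concentrated on its codimension-one faces (the walls of contractions), and that $\operatorname{Aut}(X)$ permutes those walls in a controlled, arithmetic way.

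First I would identify the walls of the nef-effective cone. Invoking the Kawamata base point free theorem with $K_X+L\equiv L$ (made possible by $K_X\simeq\mathcal{O}_X$) for $L$ sitting on a codimension-one face, each such face should arise from an elementary contraction $\phi\colon X\to Y$, be it divisorial, small, or of fiber type. Each wall is then cut out by the rational hyperplane $\phi^*\operatorname{Pic}(Y)_{\mathbb{R}}\subset\operatorname{Pic}(X)_{\mathbb{R}}$, so the walls are automatically rational.

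The heart of the argument is then to show that only finitely many of these walls are distinct modulo the action of $\operatorname{Aut}(X)$. For K3 surfaces this is classical: $(-2)$-classes in a hyperbolic lattice fall into finitely many orbits under the orthogonal group, and one transfers this via the Torelli theorem. In higher dimensions one should instead combine a finiteness result for birational models of $X$ up to isomorphism (known for Calabi-Yau threefolds) with the fact that $\operatorname{Bir}(X)=\operatorname{Aut}(X)$ for minimal Calabi-Yau manifolds, in order to reduce the counting of walls to the counting of isomorphism classes of contractions. Once this finiteness is in hand, a general theorem of Looijenga on cones with polyhedral walls produces the desired rational polyhedral fundamental domain.

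The main obstacle is clearly this middle step. Controlling simultaneously the small, divisorial, and fiber-type contractions of $X$ modulo the (possibly large) arithmetic action of $\operatorname{Aut}(X)$ on $H^{1,1}(X,\mathbb{R})$ amounts, roughly, to proving finiteness of minimal models and of Mori-type fibrations on all birational Calabi-Yau models of $X$. This is essentially the reason why the conjecture is known only in dimension $\leq 2$, for hyper-K\"ahler manifolds, and for various explicit families of Calabi-Yau threefolds, but remains open in general.
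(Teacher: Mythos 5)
You are being asked to prove something that the paper itself does not prove and does not claim to prove: the Kawamata--Morrison statement appears in the paper only as a \emph{conjecture}, used as a hypothesis in the subsequent Proposition, and it is open in general (known essentially in dimension $\le 2$, for abelian varieties, hyper-K\"ahler manifolds, and scattered families of Calabi--Yau threefolds). Your proposal is therefore not a proof, and you say so yourself: the entire content of the conjecture sits precisely in your ``middle step'', the finiteness of walls (or of marked minimal models and fiber space structures) modulo $\operatorname{Aut}(X)$, which you acknowledge you cannot supply. Sketching Sterk's and Kawamata's template plus Looijenga's fundamental-domain machinery is a reasonable description of how the known cases are proved, but it does not close the gap; it relocates it.

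Beyond the admitted gap, two steps of your outline are also inaccurate as stated. First, the walls of the nef(-effective) cone of a Calabi--Yau manifold need not be rational hyperplanes at all: the base point free theorem gives semiampleness only for \emph{rational} nef classes (with the relevant bigness/effectivity hypotheses), and the nef cone can have round or irrational boundary pieces --- already for abelian surfaces --- which is exactly why the conjecture asks for a rational polyhedral \emph{fundamental domain} rather than asserting the cone itself is rational polyhedral. Second, the claim $\operatorname{Bir}(X)=\operatorname{Aut}(X)$ for minimal Calabi--Yau manifolds is false: flops produce birational self-maps that are not regular, which is why the birational version of the cone conjecture involves $\operatorname{Bir}(X)$ acting on the movable cone, a different statement from the one at hand. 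So even as a strategy outline the argument would need repair, and as a proof it does not exist --- consistent with the fact that the paper treats the statement purely as a conjecture.
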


\begin{proposition}
Suppose that the Kawamata-Morrison conjecture holds. Then, the Kobayashi conjecture is true in dimension three, except possibly if there exists a Calabi-Yau threefold of Picard number one which is hyperbolic.
\end{proposition}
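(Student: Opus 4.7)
The plan is to combine the Kawamata-Morrison conjecture with Peternell's rational curve theorem, following the birational reductions recalled above. By the Beauville-Bogomolov decomposition together with the abundance theorem in dimension three, the Kobayashi conjecture in dimension three is equivalent to the statement that no Calabi-Yau threefold is hyperbolic. Since a compact manifold carrying a rational curve is never Kobayashi hyperbolic, it suffices to produce a rational curve on every Calabi-Yau threefold $X$ with $\rho(X)\geq 2$; the case $\rho(X)=1$ is precisely the exception allowed by the statement.

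Fix such an $X$, so that $N^1(X)_{\mathbb R}$ has dimension at least two and the nef cone $\operatorname{Nef}(X)$ is a proper closed convex cone with positive-dimensional boundary. The Kawamata-Morrison conjecture gives a rational polyhedral fundamental domain $\Pi$ for the action of $\operatorname{Aut}(X)$ on the nef-effective cone $\operatorname{Nef}(X)\cap\overline{\operatorname{Eff}}(X)$. Its $\operatorname{Aut}(X)$-orbit covers the whole of this cone, and must therefore cover its boundary; since $\partial\operatorname{Nef}(X)$ has positive dimension, some translate of $\Pi$ has a positive-dimensional face $F$ sitting on $\partial\operatorname{Nef}(X)$. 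Rational polyhedrality of $\Pi$ forces $F$ to be rational, so picking a non-zero rational point in $F$ and clearing denominators produces an integral class $[L]$ which is nef, non-ample and lies in $\overline{\operatorname{Eff}}(X)$. Using $K_X\simeq\mathcal O_X$ together with Riemann-Roch and Kawamata-Viehweg vanishing on the Calabi-Yau threefold $X$, a sufficiently divisible positive multiple of $L$ carries a non-zero section, so, up to replacing $L$ by such a multiple, we obtain on $X$ a non-zero effective non-ample line bundle.

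Peternell's theorem, as quoted in the introduction, then produces a rational curve on $X$, completing the proof outside the $\rho(X)=1$ exception. The main obstacle I expect is the assertion that some $\operatorname{Aut}(X)$-translate of $\Pi$ actually meets $\partial\operatorname{Nef}(X)$ along a \emph{positive-dimensional} face rather than only at the vertex $0$. The cleanest way to establish this seems to go through the finiteness statement built into the Kawamata-Morrison package, namely that $\operatorname{Aut}(X)$ acts with finitely many orbits on the set of codimension-one faces of the nef-effective cone; each such face is then rational and supplies the desired class. Promoting a pseudo-effective integral class to a genuinely effective divisor is, by comparison, a routine application of Calabi-Yau positivity techniques, and is precisely where the triviality of the canonical class is used.
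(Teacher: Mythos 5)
Your reduction to producing a rational curve on a Calabi--Yau threefold with $\rho\ge 2$, and your final appeal to Peternell's theorem, match the paper. But there are two genuine gaps in between. First, you never use hyperbolicity itself, and this is not a cosmetic omission: the paper argues by contradiction with a \emph{hyperbolic} $X$, so that $\operatorname{Aut}(X)$ is finite, and then the Kawamata--Morrison conjecture upgrades the rational polyhedral \emph{fundamental domain} to rational polyhedrality of the \emph{nef cone itself}. Without finiteness of $\operatorname{Aut}(X)$ the nef cone can be round or have irrational boundary, the nef-effective cone may meet $\partial\operatorname{Nef}(X)$ in no non-zero rational class, and your claim that some translate of $\Pi$ has a positive-dimensional face on $\partial\operatorname{Nef}(X)$ (or your fallback via ``finitely many orbits of codimension-one faces'') does not close this; the clean fix, which you missed, is precisely that a hyperbolic compact manifold has finite automorphism group.

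Second, and more seriously, the step you call routine is the actual crux. On a Calabi--Yau threefold $\chi(\mathcal O_X)=0$, so Riemann--Roch gives $\chi(mL)=\tfrac{m^3L^3}{6}+\tfrac{m\,(c_2(X)\cdot L)}{12}$; if $L^3=c_2(X)\cdot L=0$ this is identically zero and no section is produced, while Kawamata--Viehweg vanishing needs $L$ nef \emph{and big}, which fails on the nef boundary in general. If effectivity of a multiple of an arbitrary non-zero nef class were routine, Oguiso's question would reduce to Peternell's theorem with no hypothesis on $\rho$, making the Diverio--Ferretti theorem (and its $\rho>4$ assumption) pointless. The paper's proof is organized exactly to secure the missing positivity: since the nef cone is rational polyhedral with at least $\rho(X)\ge 2$ facets, and at most one facet can lie in the hyperplane $\{c_2(X)\cdot D=0\}$ (a genuine hyperplane, because $c_1=c_2=0$ would force $X$ to be a finite \'etale quotient of a torus, which is neither hyperbolic nor Calabi--Yau in the strict sense), one can choose a rational nef non-ample $D$ with $c_2(X)\cdot D>0$, whence $\chi(D)>0$ and effectivity follows with the vanishing theorem. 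Your argument never brings in $c_2$, so it cannot reach this conclusion.
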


\begin{proof}
We shall suppose that the Kawamata-Morrison conjecture holds true and that there exists a hyperbolic Calabi-Yau threefold $X$ with $\rho(X)\ge 2$ and derive a contradiction.

%To begin with, observe that for a projective manifold with trivial canonical class and with no rational curves, the big and the ample cone coincide. Moreover, in this situation the nef boundary is locally described by the algebraic equation $(D^{\dim X})=0$.

Since $X$ is supposed to be hyperbolic, it does not contain any rational curve and $\operatorname{Aut}(X)$ is finite. The Kawamata-Morrison conjecture implies therefore that the nef cone of $X$ is rational polyhedral.
% and the above remark holds true. In particular, the nef cone and the pseudoeffective cone are both the closure of the nef effective cone and, moreover, since $X$ is hyperbolic then $\operatorname{Aut}(X)$ is finite. The Kawamata-Morrison conjecture implies therefore that the nef cone of $X$ is rational polyhedral.

Now, since it is rational polyhedral, rational points are dense on each face of the nef boundary. Moreover, at most one of these faces (which are at least in number of $\rho(X)\ge 2$) can be contained in the hyperplane given by $(c_2(X)\cdot D)=0$: in fact this is a \lq\lq true\rq\rq{} hyperplane since if $c_1(X)=c_2(X)=0$, then $X$ would be a finite \'etale quotient of a complex torus, so that $X$ would not be hyperbolic (nor a Calabi-Yau manifold in our strict sense). 

Therefore, there exists on $X$ a (in fact plenty of) nef $\mathbb Q$-divisor $D$ such that $c_2(X)\cdot D>0$. Computing its Euler characteristic and using Kawamata--Viehweg vanishing, $D$ can be shown to be effective. But then, since there exists on $X$ a non-zero effective non-ample divisor, there exists a rational curve on $X$, contradicting its hyperbolicity. 
\end{proof}

\end{document}